\newtheorem{thm}{Theorem}[section]
\newcommand{\obar}[1]{\ensuremath{\overline{#1}}}
\newcommand{\B}[2]{\ensuremath{B_{\overline{#1}#2}}}
\newcommand{\CC}[1]{\ensuremath{\mathbf{C}^{#1}}}
\newcommand{\CP}[1]{\ensuremath{\mathbf{CP}^{#1}}}
\newcommand{\RP}[1]{\ensuremath{\mathbf{RP}^{#1}}}
\newcommand{\R}[2]{\ensuremath{\mathcal{R}_{\overline{#1}#2}}}
\newcommand{\XX}{\ensuremath{\mathcal{X}}}
\newcommand{\C}[1]{\ensuremath{\mathcal{C}_{#1}}}
\newcommand{\Cb}[1]{\ensuremath{\mathcal{C}_{\obar{#1}}}}
\newcommand{\Sym}[1]{\ensuremath{\mathcal{S}_{#1}}}
\newcommand{\Alt}[1]{\ensuremath{\mathcal{A}_{#1}}}
\newcommand{\St}[1]{\ensuremath{\mathrm{St}{(#1)}}}
\newcommand{\I}[1]{\ensuremath{\mathcal{I}_{#1}}}
\newcommand{\Ib}[1]{\ensuremath{\mathcal{I}_{\obar{#1}}}}
\newcommand{\Oct}[1]{\ensuremath{\mathcal{O}_{#1}}}
\newcommand{\Octb}[1]{\ensuremath{\mathcal{O}_{\obar{#1}}}}
\newcommand{\V}{\ensuremath{\mathcal{V}}}
\newcommand{\Vh}[1]{\ensuremath{\widehat{\mathcal{V}}_{#1}}}
\newcommand{\pQ}[2]{\ensuremath{p_{\obar{#1}#2}}}
\newcommand{\pV}[2]{\ensuremath{p_{\obar{#1}#2}}}
\newcommand{\pT}[2]{\ensuremath{p_{#1* #2}}}
\newcommand{\pTb}[2]{\ensuremath{p_{\obar{#1* #2}}}}
\newcommand{\F}[1]{\ensuremath{F_{#1}}}
\title[New light on the sextic]{New light on solving the sextic by iteration:\\
An algorithm using reliable dynamics}
\author{Scott Crass}
\address{Mathematics Department\\
CSULB\\
Long Beach, CA  90840-1001}
\email{scrass@csulb.edu}
\begin{document}

\maketitle

\section{Overview: Light that is new}

In the mid 1990s the author developed an algorithm that approximates the roots of one-variable polynomials in degree six.  The method employed the dynamical properties of a special map
$h$ on \CP{2}.  A distinguishing feature of this approach to equation-solving is that $h$ is \emph{equivariant}; it respects the symmetries of the \emph{Valentiner group} \V---a collection of projective transformations isomorphic to the alternating group \Alt{6}.  Recall that \Alt{6} is the Galois group of a generic sextic after extracting the square-root of its discriminant.  Being equivariant, $h$ sends a group orbit to a group orbit.  In algebraic terms, $h$ commutes with each element of \V.  While the geometric behavior of $h$ is elegant, its crucial global dynamical properties were and remain unproven.  In particular, it was conjectured that a certain Valentiner orbit of $72$ superattracting periodic points has basins of attraction that form a dense subset of \CP{2}.

Recent results of work on a certain family of \emph{complex reflection groups} \cite{crit_fin} as well as on an iterative solution to certain seventh-degree equations \cite{heptic168} led to a substantial dynamical advance: the discovery of a new ``\V-equivariant" $g$ whose dynamics is known to have the desired global properties.  A reflection group---a class to which the Valentiner action belongs---is generated by transformations each of which pointwise fixes a \emph{reflection hyperplane}.  \cite{st}  Indeed, $g$ fits into a pattern seen in a number of reflection groups.  Namely, it both fixes and is critical on the reflection hyperplanes---lines, in this case.  This property of being \emph{critically finite} strongly influences the map's dynamics.  So much so that proofs for its global dynamical behavior are no longer elusive.

The following discussion 1) examines the geometry and algebra of the Valentiner group that is essential for constructing and analyzing $g$, 2) explicitly computes $g$ and establishes its crucial properties, and 3) harnesses $g$'s dynamics to a \emph{reliable} iterative algorithm for solving the sextic.  `Reliable' means that the procedure succeeds in producing a root on a open dense set's worth of polynomials and initial conditions.

Computational results are due to \emph{Mathematica}.  Dynamical plots were generated by \emph{Julia} \cite{julia} and \emph{Dynamics 2} \cite{dyn2}. 
\section{Elements of the Valentiner group: Mirrors, polynomials, and maps}

The treatment here is minimal, pointing out only those features relevant to the construction of $g$, the special \V-equivariant in degree $31$.  More thoroughgoing examinations of the Valentiner group appear in \cite{note} and, especially, \cite{sextic}.

\subsection{Lifting the action}

Working with the Valentiner group computationally requires having a matrix representation.  The \emph{minimal} lift of \V\ to \CC{3} occurs in a one-to-three fashion yielding the group \Vh{3} where $\det{T}=1$ for all $T\in \Vh{3}$.  Extending to
$$\Vh{6}=\Vh{3} \cup -\Vh{3}$$
gives a \emph{complex reflection group}.  (See \cite{st}.)

\subsection{Basic  geometry}

Each of the $45$ transposition-pairs $(ab)(cd)$ in \Alt{6} corresponds to a \emph{complex reflection} on \CP{2} that belongs to \V.
%Corresponding to each of the $45$ transposition-pairs $(ab)(cd)$ in \Alt{6} is a \emph{complex reflection} on \CP{2} that belongs to \V.   
Such a transformation fixes a \CP{1} pointwise---a complex mirror---as well as a point not on the mirror.  For convenience, call these objects ``$45$-lines" and ``$45$-points" respectively.

Among alternating groups, \Alt{6} is distinguished by having \emph{two} systems of six versions of \Alt{5}---the six things that \Alt{6} permutes under conjugation.  The Valentiner counterpart \I{a} or \Ib{c} ($a,c=1\dots 6 $) to an \Alt{5} subgroup preserves a quadratic form $C_{a}$ or $C_{\bar{c}}$ and, hence, the associated conics
$$\C{a}=\{ C_a=0\} \quad \text{or} \quad  \Cb{c}=\{ C_{\bar{c}}=0\}.$$
Each conic is, by parametrization, a \CP{1} thereby endowed with an icosahedral structure.

Along with the $45$-lines, the icosahedral conics are fundamental to Valentiner geometry. Another basic piece of the Valentiner structure concerns the special \V-orbits where $45$-lines intersect each other as well as the conics.  Five of these lines meet at a $36$-point, four at a $45$-point,  and three at a $60$-point associated with either the ``barred" or ``unbarred" family of conics.  Special points and certain icosahedral groups have a taxonomic association that gives rise to a convenient terminology.  A pair of barred or unbarred icosahedral conics determines an octahedral structure, namely,
$$
\Octb{ac}=\St{\Cb{a}\cup \Cb{c}}\simeq \Sym{4} \quad \text{and}\quad \Oct{bd}=\St{\C{b}\cup\C{d}}\simeq \Sym{4}
$$
are octahedral groups by virtue of their preserving the conics
$$
\Cb{ac}=\{C_{\obar{a}}+C_{\obar{c}}=0\} \quad \text{and}\quad
 \C{bd}=\{C_{b}+C_{d}=0\}.
$$
Here, $\mathrm{St}(X)$ denotes the stabilizer of $X$.  Table \ref{tab:specPts} summarizes the special structure.

\begin{table}[ht]
\caption{Special points}
\label{tab:specPts}
\centering

\small

\fbox{
\begin{tabular}{cccc}

Special point&Descriptor&Stabilizer\\[-10pt]
\hrulefill&\hrulefill&\hrulefill\\

$36$-point& \pQ{a}{b}&$\Ib{a} \cap \I{b}\simeq D_5$\\
$45$-point& \pV{ac}{bd}&$\Octb{ac}\cap \Oct{bd}\simeq D _4$\\
$60$-point& \pT{a}{bcd}&$\I{e} \cap \I{f} \cap \I{a}\simeq \Sym{3}$&
$\{e,f\}=\{1,\dots,6\}-\{a,b,c,d\}$\\
\obar{60}-point& \pTb{a}{bcd}&$\Ib{e} \cap \Ib{f} \cap \Ib{a}\simeq \Sym{3}$&$\{e,f\}=\{1,\dots,6\}-\{a,b,c,d\}$\\

\end{tabular}
}

\end{table}

It turns out that there is an antiholomorphic involution $B$  that exchanges the two systems of icosahedral conics.  A choice of coordinates yields  $B(y)=\obar{y}$ so that $B$ acts as a kind of reflection through an \RP{2}---the points with real $y$ coordinates.  Indeed, there are $36$ such antilinear maps $\B{a}{b}$ in $ B \V$---one for each barred-unbarred pair---that form a set of generators for the extension group
$$\obar{\V}=\V \cup B \V.$$
Of course, each \B{a}{b} fixes an \RP{2} mirror \R{a}{b}.

\subsection{Generating invariants and equivariants}

Under \Vh{6}, the forms $C_a$ (or $C_{\obar{c}}$) are permuted up to a factor of a cube root of unity.  Thus, the $C_{a}^3$ (or $C_{\obar{c}}^3$) undergo \emph{simple} permutation; that is,
$$
\text{for all T}\ \in \Vh{6}\ \text{and}\ c \in \{1,\dots,6\},\
C_a^3\circ T = C_{\tau(a)}^3\ \text{where}\ \tau(a)\in \{1,\dots,6\}.
$$
Accordingly,
$$
F= \alpha\,\sum_{a=1}^6 C_a^3= \obar{\alpha}\,\sum_{c=1}^6 C_{\obar{c}}^3
$$
is \Vh{6}-invariant ($\alpha$ is a simplifying constant).  Moreover, $F$ is the \Vh{6} invariant of least degree and, from it, we can derive two other \Vh{6} forms
$$\Phi=\det{H(F)} \qquad \Psi=\det{BH(F,\Phi)}$$
where $H(F)$ is the hessian of $F$ and $BH(F,\Phi)$ is the \emph{bordered hessian} of $F$ and $\Phi$.  Furthermore, the algebraically independent set $\{F,\Phi,\Psi\}$ generates the ring of \Vh{6}-invariant polynomials $\CC{}[y]^{\Vh{6}}$.

Another form that plays an important role is the jacobian determinant
$$X=\det{J(F,\Phi, \Psi)}$$
which factors into $45$ linear forms that define the $45$-lines.  However, while $X$ is \Vh{3}-invariant, it enjoys only \emph{relative} \Vh{6} invariance.  Specifically, $$X\circ T=-X \quad \text{for} \quad T\in \Vh{6}-\Vh{3}$$
so that $X^2$ is \Vh{6}-invariant and thereby expressible in terms of $F$, $\Phi$, and $\Psi$.

We can form \Vh{6}-equivariant maps by manipulation of the basic invariants:
$$
\psi=\nabla F \times \nabla \Phi \qquad
  \phi=\nabla F \times \nabla \Psi \qquad
  f=\nabla \Phi \times \nabla \Psi
  $$
where $\nabla$ indicates a formal gradient operator.  To obtain a set of generators for the \Vh{3} equivariants as a module over $\CC{}[y]^{\Vh{6}}$, include three more maps: the identity,
$$
h=\frac{h_{64}}{X}, \quad \text{and} \quad k=\frac{k_{70}}{X}
$$
where $h_{64}$ and $k_{70}$ are \Vh{6} equivariants of degree $64$ and $70$---expressed in terms of $\psi$, $\phi$, and $f$---that vanish on $\{X=0\}$.   After division by $X$, the result is a degree-$19$ map $h$ (``$19$-map" for short) and $25$-map $k$. 
\section{The $31$-map: Algebra, geometry, and dynamics}

\subsection{Algebraic derivation and geometric properties}

Consider the family of $76$-maps,
\begin{align*}
f_{76}=&\ (a_1\,F^{10} + a_2\,F^8 \Phi + \dots + a_{10}\,\Psi^2)\,\psi\\
 &+ (b_1\,F^7 + \dots + b_6\,\Phi \Psi)\,\phi
+(c_1\,F^6 + \dots + c_5\,\F \Psi)\,f.
\end{align*}
Utilizing the technique that produced the maps $h$ and $k$, determine coefficients so that $X$ divides each coordinate polynomial of $f_{76}$ and, thereby, obtain a seven-parameter family of $31$-maps
$$f_{31}=\frac{f_{76}}{X}.$$
Next, by solving a nonlinear system of equations, force $f_{31}$ to be critical on a $45$-line and, thus, on all such lines.  The resulting map $g$ has a number of special geometric and dynamical properties that make it particularly well-suited for residing at the core of a sextic-solving algorithm. First, consider $g$'s algebraic form.  For later purposes, having its degree-$76$ expression is convenient:
\small \begin{align*}
&X\cdot g=-162\,(67\,F^6+812\,\Phi F^4+699\,\Phi^2
   F^2-741\,\Psi F-150\,\Phi^3)f \\
   &- 18\,(19\,F^7-1016\,\Phi F^5-13457\,\Phi^2
   F^3+18\,\Psi F^2-11970\,\Phi^3 F+13500\,\Phi
   \Psi)\phi \\
   &+ (-76\,F^{10}-564\,\Phi
   F^8+5244\,\Phi^2 F^6-48672\,\Psi F^5+66028\,\Phi^3 F^4\\
   &-591336\,\Phi \Psi F^3+232848\,\Phi^4 F^2-517752\,\Phi^2 \Psi F+70200\,\Phi^5+577125\,\Psi^2)\psi.
\end{align*} \normalsize

Since $g$ is critical on $\XX=\{X=0\}$, $X$ divides $g$'s critical polynomial
$$C_g=\det{J_{\widehat{g}}}$$
where $J_{\widehat{g}}$ is the Jacobian matrix of a lift $\widehat{g}$ of $g$ to \CC{3}.  Moreover, the degree of $C_g$ is $90=3 (31-1)$ so that $X^2$ also divides $C_g$.  That is, for some non-zero $\beta\in \CC{}$,
$$C_g=\beta\,X^2.$$
Adapting arguments from \cite{crit_fin} establishes some basic properties of $g$.

\begin{thm} \label{thm:g_holom}
In the family of \V-equivariant $31$-maps, $g$ is the unique holomorphic member that is doubly-critical on \XX.
\end{thm}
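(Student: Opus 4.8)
The plan is to extract from the conditions "holomorphic" and "doubly-critical on $\XX$" a system of polynomial equations on the seven parameters of the $31$-map family $f_{31}$, and to show that system has a unique solution, namely the $g$ written above. First I would fix, once and for all, a lift $\widehat{f}_{31}$ to $\CC{3}$ of a generic member of the family; its coordinate polynomials have degree $31$, and its critical polynomial $C_{f_{31}} = \det J_{\widehat{f}_{31}}$ has degree $90$. The key structural input, already recorded in the excerpt, is that $X$ (degree $45$, cutting out the $45$-lines) divides $C_{f_{31}}$ as soon as $f_{31}$ is critical on a single $45$-line — equivariance under $\V$ then propagates criticality to all $45$ of them — and that a degree count forces $X^2 \mid C_{f_{31}}$ once $X \mid C_{f_{31}}$, since $90 = 2\cdot 45$. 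So "doubly-critical on $\XX$" is equivalent to the single polynomial identity $C_{f_{31}} = \beta X^2$ for some $\beta \in \CC{}$, which by comparing the two sides in the invariant ring $\CC{}[y]^{\Vh{6}}$ (recall $X^2$ is $\Vh{6}$-invariant and expressible in $F,\Phi,\Psi$) becomes a finite list of equations in the $a_i, b_j, c_k$.

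Next I would address holomorphy. A member $f_{31} = f_{76}/X$ is only a priori a rational map of $\CP{2}$; it descends to a genuine holomorphic self-map exactly when the three coordinate polynomials of $f_{76}$ have no common factor after dividing by $X$ — equivalently, when their common zero locus has codimension $\geq 2$. The natural way to enforce this is to require that $X$ divides each coordinate of $f_{76}$ to order exactly one (not two) and that the quotient coordinates share no further common component; I would phrase "$X$ divides $f_{76}$" as the defining condition of the $31$-map family (this is how the family was constructed in the excerpt, via the technique producing $h$ and $k$), and then check holomorphy of the resulting $f_{31}$ as a genericity/closed-condition statement. Concretely, I expect to follow \cite{crit_fin}: identify the possible base loci, observe they are forced to lie on $\XX$ or on the special Valentiner orbits (the $36$-, $45$-, $60$-points), and rule out a positive-dimensional base locus by a direct check on the one remaining candidate component.

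The main obstacle is the uniqueness half — showing the criticality system cuts out a single point in the seven-parameter space rather than a positive-dimensional variety. I would handle this by a careful bookkeeping argument: the identity $C_{f_{31}} = \beta X^2$ must hold coefficient-by-coefficient once both sides are rewritten as $\Vh{6}$-invariants of degree $90$, and the dimension of that space of invariants, together with the constraint that $C_{f_{31}}$ is a determinant (hence its coefficients are specific quadratic-and-higher expressions in the $a_i,b_j,c_k$), should yield exactly enough independent equations to pin down the parameters up to the overall scaling that is already quotiented out in the family $f_{31}$. Here I expect to lean on \cite{crit_fin}, where the analogous rigidity for reflection-group equivariants is proved, and to verify the Valentiner-specific instance by the explicit \emph{Mathematica} computation that also produces the degree-$76$ formula for $X\cdot g$ displayed above; the computation simultaneously exhibits one solution and, by Gröbner-basis elimination, certifies it is the only one. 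Finally, holomorphy of that particular solution $g$ follows by checking $C_g = \beta X^2$ with $\beta \neq 0$, which shows the critical locus is exactly $\XX$ with no extraneous components and in particular no common factor survives in the coordinates, completing the identification of $g$ as the unique such map.
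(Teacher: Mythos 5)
Your route and the paper's are genuinely different. The paper does not set up an elimination problem in the seven parameters at all: uniqueness is treated as coming from the construction itself (the nonlinear system forcing criticality on one $45$-line, hence by equivariance on all of \XX, pins down the coefficients), and the displayed proof is devoted entirely to holomorphy. That argument is short and concrete: choose coordinates on \CC{3} so that a $45$-line $L$ is $\{y_3=0\}$; equivariance forces the lift to satisfy $\widehat{g}(y_1,y_2,0)=(s(y_1,y_2),t(y_1,y_2),0)$; a base point on $L$ would make the binary forms $s$ and $t$ share a root and hence force $\mathrm{Res}(s,t)=0$, which a computation rules out. Your plan---rewrite ``doubly-critical'' as $C_{f_{31}}=\beta X^2$, expand both sides in $\CC{}[y]^{\Vh{6}}$, and certify by Gr\"obner elimination that the system in the $a_i,b_j,c_k$ has a single solution---is a legitimate, if much heavier, way to get uniqueness, and is probably close to how $g$ was actually found; what it buys is an explicit certificate for the uniqueness claim, which the paper leaves implicit.

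There are, however, two concrete gaps. First, the assertion that ``a degree count forces $X^2\mid C_{f_{31}}$ once $X\mid C_{f_{31}}$, since $90=2\cdot 45$'' is a non sequitur as written: $C_{f_{31}}=X\cdot Q$ with $\deg Q=45$ does not make $Q$ proportional to $X$ on degree grounds alone. What rescues it is invariant theory: $\det J_{\widehat{f}_{31}}$ is a relative \Vh{6}-invariant with the same character as $X$, and the module of relative invariants for that character is free of rank one over $\CC{}[y]^{\Vh{6}}$ with generator $X$, so the degree-$45$ cofactor $Q$ must be a constant multiple of $X$. You need to say this, or simply take double criticality as the hypothesis the theorem already provides. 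Second, your holomorphy step is exactly the place where a concrete certificate is indispensable, and you never produce one: ``identify the possible base loci \dots and rule out a positive-dimensional base locus by a direct check'' defers the point at issue. The missing ideas are (i) Euler's relation, which gives $J_{\widehat{g}}(a)\,a=0$ at any base point $a$ and hence forces every base point into $\{C_g=0\}=\XX$ once $C_g=\beta X^2$ with $\beta\neq 0$, and (ii) the nonvanishing resultant of $s$ and $t$ on a single $45$-line, which then excludes all of them by equivariance. Without these two steps (or an equivalent computation) your argument does not close.
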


\begin{proof}
Let $\widehat{g}$ be a lift of $g$ to \CC{3} and make a linear change of coordinates
$$(y_1,y_2,y_3)=T(x_1,x_2,x_3)$$
so that $L=\{y_3=0\}$.  Denoting the map in $y$ also by $g$,
$$\widehat{g}(y_1,y_2,0) = (s(y_1,y_2),t(y_1,y_2),0).$$
If $\widehat{g}(a)=(0,0,0)$, the homogeneous components $s$ and $t$ have a common factor which, contrary to fact, requires the resultant of $s$ and $t$ to vanish.
\end{proof}

\subsection{Dynamical behavior}

 The central dynamical object here is the invariant Fatou set $\mathcal{F}_g$ where $\{g^k\}$ is a normal family.  Its complement is the Julia set $\mathcal{J}_g.$

Since a $45$-line $L$ is fixed pointwise by an element of \V, a holomorphic \V-equivariant fixes $L$ setwise.  Accordingly, $g$ is \emph{strictly critically finite} (see \cite{fs}).   An intersection of $45$-lines is a fixed point that is superattracting both along and away from the line.  Elsewhere on a $45$-line, $g$ is critical in the off-line direction so that the line is a superattracting set.   Following \cite{crit_fin} and \cite{ueno}, we can see that $g$ possesses two global properties---what `reliable dynamics' means.  The arguments' details can be found in these sources.

First, a computation that expresses $g$ when restricted to a $45$-line $L$: in symmetrical coordinates where the four $45$-points on $L$ are
$$\bigl\{60^{-1/4}(\pm 1 \pm i)\bigr\},$$
\small
$$g|_L(z)= \frac{p(z)}{q(z)}$$
where
\small
\begin{align*}
p(z)=&\
-15 (60381703125 z^{30}-87218015625
   z^{28}+219610490625 z^{26}\\
&-169714828125
   z^{24}+21618140625 z^{22}-52377193125
   z^{20}+10578808125 z^{18}\\
&-14597849625
   z^{16}-555500625 z^{14}-105496875
   z^{12}+6315475 z^{10}\\
&-2034375 z^8-220565 z^6-5735 z^4-465 z^2-3)\\
q(z)=&\
   z (115330078125 z^{30}-1191744140625
   z^{28}+979878515625 z^{26}\\
&-2512373203125
   z^{24}+1544853515625 z^{22}+319720921875
   z^{20}\\
&+356051953125 z^{18}-124987640625
   z^{16}+218967744375 z^{14}\\
&+10578808125
   z^{12}+3491812875 z^{10}+96080625
   z^8+50285875 z^6\\
&+4337985 z^4+114855
   z^2+5301).
\end{align*}
\normalsize
The remaining intersections of $45$-lines---$36$, $60$, and $\obar{60}$-points and $L$ are identified in Figure~\ref{fig:crit_L45}.

%------------
\begin{figure}[ht]

\resizebox{3in}{!}{\includegraphics{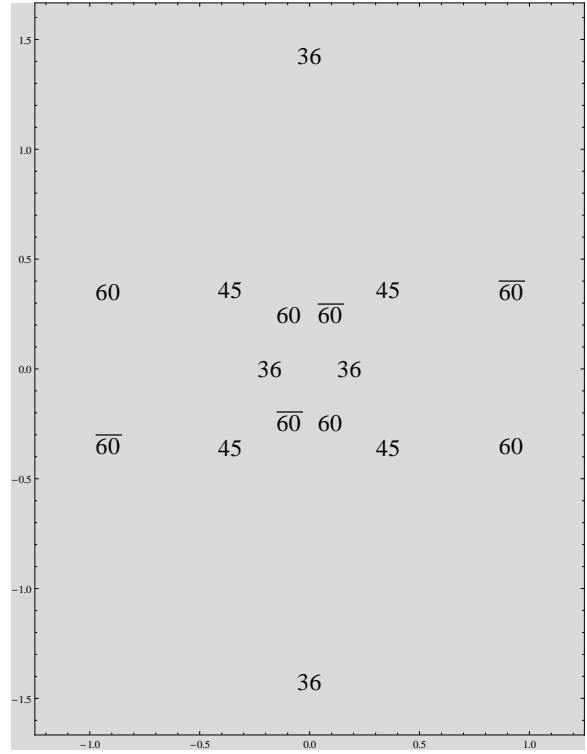}}

\caption{Critical points for $g$ restricted to a $45$-line}

\label{fig:crit_L45}

\end{figure}
%----------

Now, we consider the content of $g$'s Fatou set.

\begin{thm} \label{thm:fatou}
The $36$, $45$, $60$, and $\obar{60}$-points are the only superattracting fixed points of $g$.  Moreover, their basins exhaust the Fatou set $\mathcal{F}_g$ of $g$.
\end{thm}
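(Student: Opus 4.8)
The plan is to show two things: that the listed special points are the only superattracting fixed points, and that the union of their basins is all of $\mathcal{F}_g$. For the first part, I would argue that any superattracting fixed point of a holomorphic \V-equivariant must be a critical point, hence (by $C_g=\beta\,X^2$) must lie on the ramification locus $\XX=\{X=0\}$, i.e.\ on one of the $45$-lines. Restricting attention to a single $45$-line $L$, a fixed point of $g$ on $L$ that is superattracting in \CP{2} must in particular be a superattracting fixed point of the one-dimensional map $g|_L(z)=p(z)/q(z)$ computed above. Examining $p$ and $q$, the fixed-point equation $p(z)-z\,q(z)=0$ together with the multiplier condition $(g|_L)'=0$ picks out exactly the intersection points of $L$ with the other $45$-lines---the $36$-, $60$-, and $\obar{60}$-points shown in Figure~\ref{fig:crit_L45}---plus the four $45$-points $60^{-1/4}(\pm1\pm i)$ on $L$ itself. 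One then checks directly (or cites the analogous verification in \cite{crit_fin}) that each of these is genuinely fixed by $g$ in \CP{2} and is superattracting in both the tangential and transverse directions, while no other fixed point on $L$ has vanishing multiplier; by \V-equivariance the same holds on every $45$-line, and off $\XX$ there are no critical points at all, so there are no further superattracting fixed points.

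For the second part---that the basins exhaust $\mathcal{F}_g$---I would invoke the critically finite structure. Because $g$ is strictly critically finite with postcritical set contained in the forward orbit of $\XX$, and because $g$ maps $\XX$ into itself with $\XX$ a union of superattracting lines whose mutual intersections are the superattracting fixed points found above, the postcritical set is essentially $\XX$ together with those fixed points. Following the Fatou-component analysis in \cite{crit_fin} and \cite{ueno} for critically finite maps on \CP{2}: a Fatou component is eventually periodic, and a periodic Fatou cycle of a critically finite map must attract a critical point, hence must meet the postcritical closure; one then rules out rotation domains and Siegel/Herman-type behavior (no invariant line field, by the critical finiteness) and rules out attracting cycles other than the superattracting fixed points already identified (any attracting periodic point would again have to be critical, forcing it onto $\XX$, where the dynamics is controlled by the one-dimensional maps $g|_L$ whose Fatou components are already accounted for). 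What remains is to show there is no Fatou component on which $g$ is not eventually absorbed into one of the $72$ (or $36+45+60+\obar{60}$) basins---this is where the one-dimensional restriction does the work: on $\XX$, $g|_L$ is a rational map of degree $31$ whose Julia set must be checked to have empty interior, so that every Fatou point near $\XX$ flows to a special point, and a hyperbolicity/expansion estimate transverse to $\XX$ then propagates this to a neighborhood of the whole postcritical set.

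The main obstacle is the second part: establishing that the Fatou set contains nothing beyond these basins, i.e.\ that there are no ``extra'' Fatou components---wandering domains, rotation domains, or basins of some unforeseen attracting or parabolic cycle lying off $\XX$. In one complex dimension, Sullivan's theorem and the critically finite hypothesis would dispatch wandering domains and force every Fatou cycle to be a superattracting basin, but in \CP{2} the analogous statements are more delicate and rely precisely on the machinery of \cite{crit_fin} and \cite{ueno}: one needs an orbifold/hyperbolic-metric argument or an expansion estimate for $g$ on a neighborhood of its postcritical set, together with a careful verification that the ``postcritically finite'' combinatorics here actually satisfies the hyperbolicity hypotheses of those theorems (no ``thin parts'' or bad intersections in the postcritical set). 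A secondary, more computational obstacle is confirming that the interior of the Julia set of each $g|_L$ is empty---equivalently that $g|_L$ has no non-basin Fatou components---which should follow from the same critically-finite-in-one-variable argument since all $31\cdot2-2=60$ critical points of $g|_L$, counted with multiplicity, are accounted for by the special points and their (eventual) fixedness, but this needs to be checked against the explicit $p,q$ above.

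Overall: reduce superattracting fixed points to $\XX$ via $C_g = \beta\,X^2$; classify them on a single $45$-line using the explicit $g|_L$; then apply the critically-finite Fatou-classification of \cite{crit_fin,ueno} to conclude the basins fill $\mathcal{F}_g$, with the hyperbolicity of $g$ near its postcritical set being the crux.
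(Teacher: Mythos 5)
Your overall strategy matches the paper's: reduce superattracting fixed points to the critical locus $\XX$, classify them via the explicit restriction $g|_L$, and then invoke critically-finite machinery to show the basins exhaust $\mathcal{F}_g$. Two points of comparison. For the first half, the paper's argument is a little cleaner than your fixed-point-plus-multiplier computation: it observes that a point superattracting in all directions must in particular be a critical point of the one-variable map $g_L=g|_L$, and then uses the factorization $p'(z)q(z)-p(z)q'(z)=\alpha\,a(z)^5 b(z)^4 c(z)^3 d(z)^3$ to show that the sixteen intersection points of $L$ with the other $45$-lines account for all $4(5+4+3+3)=60=2\cdot 31-2$ critical points of $g_L$ with multiplicity, so nothing else on $L$ can even be critical, let alone superattracting. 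For the second half, you have identified the right class of results but substantially overestimated the work remaining: the paper cites a single theorem of Fornaess and Sibony (\cite{fs}, Theorem~7.8), which says that a strictly critically finite holomorphic map on \CP{2} possessing an irreducible component $A$ of its critical set with $f(A)=A$ has Fatou set consisting \emph{only} of basins of superattracting periodic points. Since $g$ was already shown to be strictly critically finite with each $45$-line fixed, this one citation disposes simultaneously of rotation domains, wandering domains, and hypothetical attracting or parabolic cycles off $\XX$; the hyperbolicity estimates near the postcritical set and the verification that $\mathcal{J}_{g|_L}$ has empty interior, which you flag as the crux, are internal to that theorem (and, for the line restrictions, to the standard one-dimensional critically-finite theory) and do not need to be redone here. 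Your route would eventually work, but it reproves a packaged result.
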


\begin{proof}
Since the $45$-lines don't blow down to points, their intersections are fixed and critical in all directions.  Conversely, if $a$ is critical in all directions, it belongs to a $45$-line $L$ and is a critical point under $g_L=g|_L$.  But, the sixteen points of intersection between $L$ and the remaining $45$-lines account for all such points.  To see this, compute the polynomial that determines the critical points of $g_L$:
$$p'(z)\,q(z) - p(z)\,q'(z)= \alpha\,a(z)^5\, b(z)^4\, c(z)^3\, d(z)^3$$
where $\alpha \in \CC{}$ and
\begin{align*}
a(z)=&\ 15\,z^4+30\,z^2-1\\
b(z)=&\ 15\,z^4+1\\
c(z)=&\ 15\,z^4-\biggl(10-8\,\sqrt{\frac{8}{3}}\,i\biggr)\,z^2-1\\
d(z)=&\ 15\,z^4-\biggl(10+8\,\sqrt{\frac{8}{3}}\,i\biggr)\,z^2-1.
\end{align*}
Expressed in the $z$ coordinate on $L$, the $36$, $45$, $60$, and $\obar{60}$-points on $L$ are the four simple roots of $a$, $b$, $c$, and $d$ respectively.  The multiplicity accounts for
$$4\,(5+4+3+3)=60$$
critical points, exactly the number that a degree-$31$ map on \CP{1} has.

As for the claim regarding the Fatou set, let $F$ be a component of $\mathcal{F}_g$.  A nice result in multi-variable complex dynamics states that a strictly critically-finite holomorphic map $f$ on \CP{2} with an irreducible component $A$ in its critical set such that $f(A)=A$ has a Fatou set consisting only of basins of superattracting periodic points.  (\cite{fs}, Theorem~7.8)  We already have that $g$ satisfies these conditions.  Since the only superattracting periodic points under $g$ are the intersections of $45$-lines, $F$ belongs to a basin of one of these fixed points.
\end{proof}

Next, we take up a question of the topology of the Fatou set.

\begin{thm}
The basins of the superattracting fixed points are dense in \CP{2}.
\end{thm}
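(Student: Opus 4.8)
The plan is to exploit the critical finiteness of $g$ together with the structure of its postcritical set, following the strategy of \cite{crit_fin}. The key observation is that the entire critical set of $g$ is the union of the $45$-lines $\XX$, each of which is invariant, and on which $g$ restricts to the rational map $g|_L = p/q$ computed above. So the forward orbit of the critical set stays inside $\XX$, and the global dynamics on $\XX$ is governed by the one-dimensional dynamics of $g|_L$. The first step is therefore to analyze $g|_L$ as a degree-$31$ rational map of \CP{1}: its $60$ critical points are precisely the $36$-, $45$-, $60$-, and $\obar{60}$-points (by the factorization of $p'q - pq'$ established in the proof of Theorem~\ref{thm:fatou}), and each of these is a superattracting fixed point. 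Hence $g|_L$ is itself critically finite with every critical point a superattracting fixed point, so its Julia set $\mathcal{J}_{g|_L}$ is the whole of \CP{1} \emph{unless} the Fatou set of $g|_L$ is exactly the union of these finitely many basins --- which it is, again by the one-dimensional version of the theorem of \cite{fs} cited above. Either way, the basins of the special points are dense in each $45$-line $L$.

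Next I would promote this one-dimensional density to a neighborhood of $\XX$ in \CP{2}. Since each special point is superattracting in all directions (at intersections of $45$-lines) or the $45$-line through it is a superattracting set in the transverse direction, a point of $L$ lying in a basin of a special point within $L$ also lies in the two-dimensional basin of that point: a small transverse disk is contracted toward $L$ and then swept along $L$ into the attracting region. Thus the union of the two-dimensional basins contains an open neighborhood $U$ of $\XX$, and $U \cap L$ is dense in $L$ for every $45$-line. The remaining and main task is to show that the union of basins, call it $\mathcal{B}$, is dense in \emph{all} of \CP{2}, not merely near the mirrors. This is where I expect the real obstacle to lie, and it is handled via a Montel-type argument. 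Suppose, for contradiction, that some open set $W$ is disjoint from $\mathcal{B}$; then $W \subset \mathcal{J}_g$, and in particular $\{g^k\}$ is not normal on $W$. The idea is to show that the forward iterates $g^k(W)$ must eventually meet the neighborhood $U$ of $\XX$: because $g$ is critically finite with critical set $\XX$ and $g^{-1}(\XX) \supsetneq \XX$ (the preimage of the mirrors strictly contains the mirrors, since $g$ has degree $31$), the preimages $g^{-k}(\XX)$ grow and, by the no-wandering and expansion properties of strictly critically finite maps established in \cite{ueno}, any non-normal family of iterates on $W$ forces $g^{k}(W)$ to spread out and intersect $U$. Once $g^k(W)$ meets $U$, a subsequent iterate lands in $\mathcal{B}$, so $W$ meets $\mathcal{B}$ after all --- a contradiction. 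Concretely, the engine here is the fact (see \cite{fs}, \cite{ueno}) that for such maps the Julia set equals the closure of the union of the preimages of any invariant critical component, so $\mathcal{J}_g = \overline{\bigcup_k g^{-k}(\XX)}$; combined with $\mathcal{F}_g = \mathcal{B}$ from Theorem~\ref{thm:fatou}, taking complements gives exactly $\overline{\mathcal{B}} = \CP{2}$.

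The cleanest route, then, is: (1) cite Theorem~\ref{thm:fatou} to get $\mathcal{F}_g = \mathcal{B}$; (2) invoke the structure theorem for strictly critically finite maps from \cite{fs}/\cite{ueno} to the effect that $\mathcal{J}_g$ is the closure of $\bigcup_{k\ge 0} g^{-k}(\XX)$ --- here using that $\XX$ is an invariant component of the critical set on which the map is critically finite with only superattracting behavior; (3) conclude that $\mathcal{J}_g \subset \overline{\bigcup_k g^{-k}(\XX)} \subset \overline{\mathcal{J}_g}$, and since the preimages $g^{-k}(\XX)$ are each contained in $\overline{\mathcal{B}}$'s complement's complement --- more precisely, each $g^{-k}(\XX) \subset \mathcal{J}_g$ but its \emph{neighborhoods} accumulate onto $\mathcal{B}$ because density along each $45$-line (Step one above) is transported backward by the open map $g$. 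The subtle point requiring care is step (3)'s transfer of density along $\XX$ to density along each preimage sheet $g^{-k}(\XX)$, and thence to all of \CP{2}; I would argue it by noting that $g^{-k}$ is a finite branched cover, so it carries the dense-in-$L$ subset $U \cap L$ to a set dense in each component of $g^{-k}(L)$, and since $\bigcup_k g^{-k}(\XX)$ is dense in $\mathcal{J}_g$ while $\mathcal{F}_g$ is open, the only way density can fail in \CP{2} is if $\mathcal{F}_g$ had a component disjoint from all these sheets --- impossible, since every Fatou component is a basin whose boundary lies in $\mathcal{J}_g = \overline{\bigcup_k g^{-k}(\XX)}$. This last step is the heart of the matter and is exactly where the hypotheses of critical finiteness and invariance of $\XX$ are doing their essential work.
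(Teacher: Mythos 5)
Your proposal is, at its core, the paper's own argument: the multi-dimensional Montel-type theorem (\cite{crit_fin}, Theorem~5.5) shows that pre-critical points are dense in $\mathcal{J}_g$, so any open set $W$ meeting the Julia set intersects some $g^{-m}(\XX)$; pushing forward, $g^m(W)$ meets a $45$-line $L$; the one-dimensional density of the basins of $g|_L$ in $L$ (together with transverse superattraction of $L$) then forces $g^m(W)$, hence $W$, to contain basin points. Your middle paragraph's contradiction argument is exactly this, and it is correct.

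However, the ``engine'' you lean on in your final step is stated wrongly, and the step built on it is both unnecessary and confused. The equality $\mathcal{J}_g = \overline{\bigcup_{k\ge 0} g^{-k}(\XX)}$ is false: almost every point of $\XX$ lies in the \emph{Fatou} set, since a point of $L$ in a basin of $g|_L$ is (by transverse superattraction) in the two-dimensional basin of the corresponding superattracting fixed point, and the same holds for the preimage sheets $g^{-k}(\XX)$. So your assertion that ``each $g^{-k}(\XX) \subset \mathcal{J}_g$'' is wrong, and the inclusion $\overline{\bigcup_k g^{-k}(\XX)} \subset \overline{\mathcal{J}_g}$ in your step (3) fails. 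Only the inclusion $\mathcal{J}_g \subset \overline{\bigcup_k g^{-k}(\XX)}$ holds, and fortunately that is the only direction the argument needs. You should therefore discard step (3) of your ``cleanest route'' entirely --- it is not ``the heart of the matter''; the heart is the Montel inclusion plus the one-dimensional density on $L$, which you already have. One further small quibble: the dichotomy ``$\mathcal{J}_{g|_L} = \CP{1}$ unless\dots'' is needlessly contorted; since $g|_L$ is critically finite with all sixty critical points being superattracting fixed points, its Fatou set is nonempty and consists of their basins, which are dense (indeed of full measure) in $L$ --- that is all you need.
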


\begin{proof}
Take $\jmath \in \mathcal{J}_g$ and let $U$ be a neighborhood of $\jmath$.  By a multi-dimensional analogue of Montel's Theorem, $U$ contains critical or pre-critical points, that is,
$$\bigcup_{k=0}^\infty g^{-k}(\XX)\ \bigcap\ U\ \neq\ \emptyset.$$
(See \cite{crit_fin}, Theorem~5.5.)
Consequently, some $g^m(U)$ eventually meets \XX\ and hence a $45$-line $L$.  Since $g|_L$ is critically finite with fixed critical points, its Fatou set $\mathcal{F}_{g|_L} \subset L$ consists of basins of the superattracting points in $L$.  But, these basins are dense---indeed, have full measure---in $L$.  Thus,
$$g^m(U)\ \cap\ \mathcal{F}_{g|_L}\ \neq\ \emptyset$$
so that $g^m(U)$, hence $U$, contains points in the basin of a superattracting fixed point.  (Note that the claim pertaining to the Fatou set of $g$ in Theorem~\ref{thm:fatou} is a corollary to this result.)
\end{proof}

Plotting the basins of attraction on a $45$-line $L$ reveals sixteen distinct basins, one for each intersection of $L$ with a cluster of $45$-lines.  Four clusters of four lines meet $L$ in $36$-points, four clusters of three lines occur at $45$-points, and eight clusters of two lines determine $60$ and $\obar{60}$-points.   Rendering the \CP{1} as a sphere in Figure~\ref{fig:gL45}, each colored region represents a basin for one of the superattracting points.  The prominent regions indicate immediate basins each of which corresponds to one of the points in Figure~\ref{fig:crit_L45} projected onto the sphere.  Although the overall symmetry of $L$ is dihedral ($D_4$), the symmetry \emph{on} the line is Klein-$4$, since reflection through $L$ acts as the identity on $L$.

Due to its $\obar{\V}$ symmetry, $g$ preserves each \R{a}{b}---the \RP{2} mirror fixed point-wise by one of the anti-holomorphic generators of the extended Valentiner group $\obar{V}$.   Figure~\ref{fig:gR36} shows the basins of attraction for eleven superattracting fixed points on the \RP{2}. (Here, the plane is projected onto a disk.)  In these coordinates, $(0,0)$ is the $36$-point to which the \RP{2} corresponds while five $36$-points and five $45$-points are placed symmetrically about the center.  In fact, the $45$-points are on the line at infinity---the boundary circle.  Here, the plane's $D_5$ structure is apparent with the five lines of reflective symmetry occurring where certain $45$-lines meet the real plane in an \RP{1}.  Two of these \RP{1}s show up as lines of reflective symmetry in Figure~\ref{fig:gL45}. 

%------------
\begin{figure}[ht]

\resizebox{\textwidth}{!}{\includegraphics{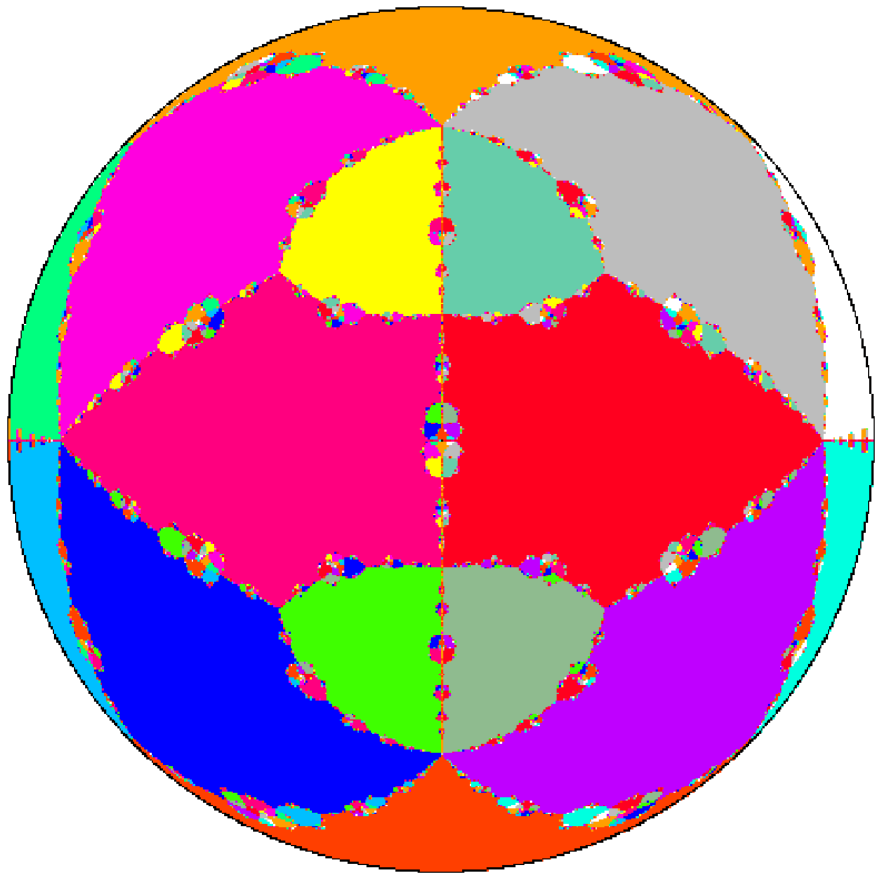}}

\caption{Basins of attraction on a $45$-line.  (Produced by \emph{Julia}.)}

\label{fig:gL45}

\end{figure}
%----------

%------------
\begin{figure}[ht]

\resizebox{\textwidth}{!}{\includegraphics{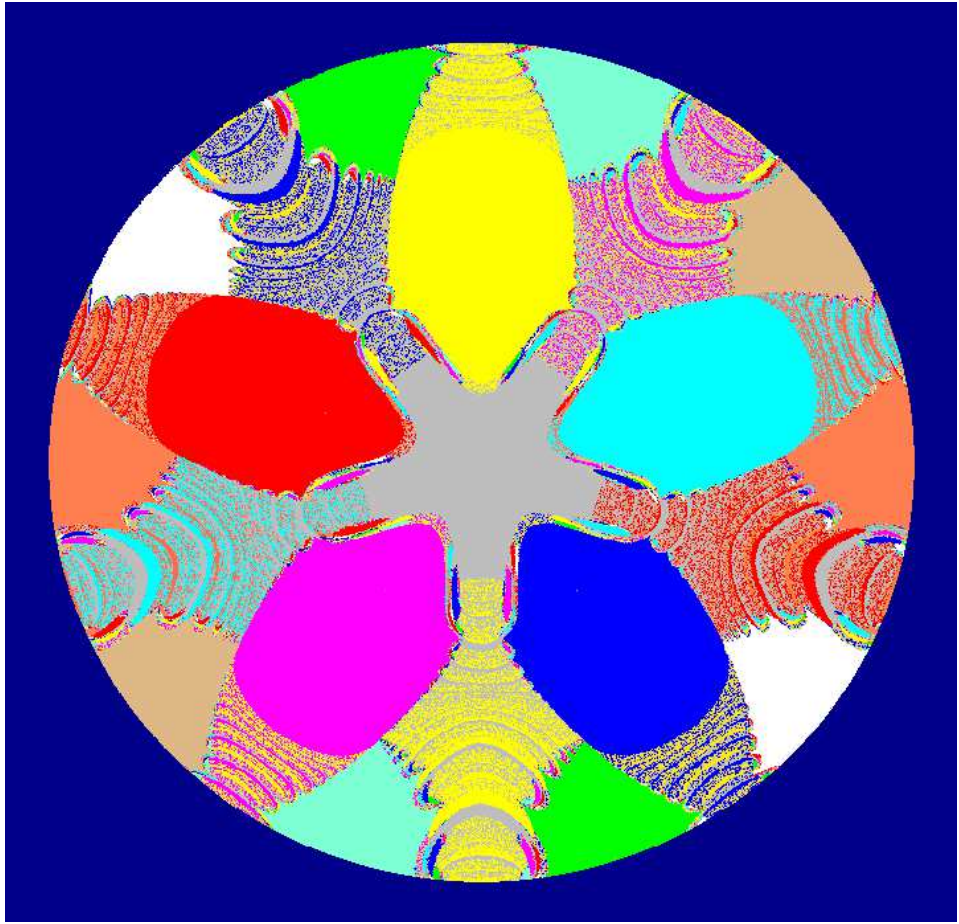}}

\caption{Basins of attraction on a $36$-\RP{2}.  (Produced by \emph{Dynamics 2}.)}

\label{fig:gR36}

\end{figure}
%---------- 
\section{A new map yields a new algorithm}

Although the general form taken by the current sextic-solving procedure is not new, some of its specific features differ from previous methods with respect to practical implementation.  (Again, an extensive description appears in \cite{sextic}.)

\subsection{Sixth-degree resolvent}

Recall the six quadratic forms $C_k$ that one system of icosahedral subgroups in \Vh{3} preserves, up to a factor of a cube root of unity.  By designating the six values
$$s_k(x)=\frac{C_k(x)^3}{F(x)}$$
as roots, the sextic
$$P_x(z) = \prod_{k=1}^6 (z - s_k(x)) = \sum_{k=1}^6 a_k(x)\,z^k$$
has coefficients $a_k(x)$ that are \Vh{6}-invariant rational functions.  Hence, each $a_k$ is expressible in terms of the Valentiner parameters
$$V_1=\frac{\Phi(x)}{F(x)^2} \qquad V_2=\frac{\Psi(x)}{F(x)^5}$$
giving a family of sextic resolvents
 \small \begin{align*}
&P_V(z) =\
z^6+\frac{1}{90} i (5 i+\sqrt{15}) z^5+\frac{((-9-3 i \sqrt{15}) V_1-11 i \sqrt{15}+11) z^4}{24300}\\
&+\frac{(9 (30+i \sqrt{15}) V_1+57 i \sqrt{15}+100) z^3}{12301875}\\
&+\frac{(9 i V_1 (3 (4 i+\sqrt{15}) V_1+8 \sqrt{15}+42 i)-17 i \sqrt{15}-152) z^2}{2214337500}\\
&+\frac{(3 V_1 (9 (-25+33 i \sqrt{15}) V_1+386 i \sqrt{15}+150)-1944 i \sqrt{15} V_2+103 i \sqrt{15}+425) z}{14946778125000}\\
&+\frac{9 V_1 (9 V_1 ((45+11 i \sqrt{15}) V_1-i \sqrt{15}+25)-7 i \sqrt{15}+15)-3 i \sqrt{15}-5}{2421378056250000}.
\end{align*} \normalsize
Hence, each $a_k$ is expressible in terms of the Valentiner parameters
$$V_1=\frac{\Phi(x)}{F(x)^2} \qquad V_2=\frac{\Psi(x)}{F(x)^5}$$
giving a family of sextic resolvents.  After making a simplifying change of parameter, the family is given by
\small \begin{align*}
&P_V(z) =\
-\frac{i (3 \sqrt{15}-5 i) (36\,V_1+1)^3}{2421378056250000}\\
&\frac{(1296 (65-i \sqrt{15})\,V_1^2+144
  (130+21 i \sqrt{15})\,V_1+V_2}{14946778125000}\,z\\
&+\frac{(162
  (19+3 i \sqrt{15})\,V_1^2+(-54+594 i
  \sqrt{15})\,V_1-17 i \sqrt{15}-152)}{2214337500}\,z^2\\
&+\frac{(27 (35-9 i
  \sqrt{15})\,V_1+57 i \sqrt{15}+100)}{12301875}\,z^3\\
&+\frac{(-72\,V_1-11 i
  \sqrt{15}+11)}{24300}\,z^4\\
&+\frac{1}{90} i
  (\sqrt{15}+5 i)\,z^5+z^6
\end{align*} \normalsize 
The crucial observation is that the $31$-map $g$ provides a symmetry-breaking tool for solving members of $P_V$.

\subsection{Parametrizing the Valentiner group}

Define the map
$$y=T_x w = \Psi(x)\, x\,w_1 + \Phi(x)\,h(x)\,w_2 + F(x)\,k(x)\,w_3$$
that is linear in $w=(w_1,w_2,w_3)$ and degree-$31$ in the parameter $x=(x_1,x_2,x_3)$.  Parametrize in $x$ a family of Valentiner groups
$$\V_x= T_x\,\V\,T_x^{-1}$$
where \V\ is the Valentiner group in a selected system of coordinates $y$.  (Recall that $h$ and $k$ are \V-equivariants of degrees $19$ and $25$ respectively.)

Accordingly, an $x$- parametrized version $F_x(w)=F(T_x w)$
of the invariant $F$ arises.  Moreover, each coefficient in $F_x(w)$ of the monomials in $w_k$ is a $\V_x$ invariant of degree $6\cdot 31$ and is, thereby, expressible in terms of the \emph{basic invariants} $F(x)$, $\Phi(x)$, and $\Psi(x)$.  When normalized to a \V-invariant rational function of degree zero, the coefficients are given in  terms of the parameters $V=(V_1,V_2)$:
$$F_V(w) = \frac{F_x(w)}{F(x)^{31}}.$$
Similar considerations yield parametrized forms $\Phi_V(w)$, $\Psi_V(w)$, and $X_V(w)$.

From the $V$-parametrized basic invariants, generating equivariant maps $\psi_V$, $\phi_V$, $f_V$ follow as does the critically-finite $31$-map
$g_V$.  (For an exactly analogous derivation of $g_V$, see \cite{heptic168}.
To this end, the degree-$76$ expression $X\cdot g$ is especially handy.)

\subsection{Finding a $45$-line}

For a random initial point $p_0$ in the $w$-space, the dynamical ``output"
$$g_V^k (p_0) \stackrel{k\rightarrow \infty}{\longrightarrow} p$$
is an intersection of $45$-lines in the \CP{2} under the Valentiner group corresponding to the parameter choice $V$.  At this point we encounter a complication not seen in the original sextic-solving algorithm: the possible values of $p$ are not members of a \emph{single} Valentiner orbit.  Furthermore, the combinatorial properties of the various intersections make for an awkward procedure that employs the ``point datum" $p$.  For instance, a $36$-point (at five $45$-lines) is canonically associated with one conic---hence, one root of $P_V$---in each system, whereas a $45$-point (where four $45$-lines meet) is associated with a pair of conics in each system.  In order to develop a procedure that avoids this pitfall, we can use a dramatic property of the $16$-map $\psi$ whereby each $45$-line collapses to its companion $45$-point.

The idea is to iterate \emph{not} until a fixed point is located, but only until a $45$-line is found.  Recall that each $45$-line is superattracting in the normal direction so that $p_k=g_V^k(p_0)$ will first feel the influence of a line $L$ and subsequently that of a superattracting fixed point on $L$.  Say the trajectory $p_k$ has converged ``linewise" but not pointwise so that
$$L_k=\alpha\,p_{k-1} + \beta\,p_k$$
is nearly a $45$-line.  Application of $\psi_V$ produces a near $45$-point $$a_k = \psi_V(L_k).$$
To achieve high precision we can now compute the trajectory $\{g_V(a_k)^\ell\ |\ \ell=1\dots\}$ to convergence at a $45$-point $a$, a result that provides for two roots of $P_V$.

\subsection{Selecting a pair of roots}

By direct computation, there are \Vh{6}-invariant and degree-$0$ rational functions $A_k(y)$ and $B_{k\ell}(y)$ with the following properties:
\begin{align*}
&A_k(\pV{ac}{bd})=\begin{cases}
1 \quad k=b\ \text{or}\ k=d\\
0 \quad k \neq b\ \text{and}\ k\neq d
\end{cases}\quad 1\leq k < \ell \leq 6 \\
&B_{k\ell}(\pV{ac}{bd})=\begin{cases}
1 \quad k=b\ \text{and}\ \ell=d\\
0 \quad k \neq b\ \text{or}\ \ell\neq d
\end{cases} \quad 1\leq k < \ell \leq 6.
\end{align*}
In the usual way, the functions
$$
A_V(w) = \sum_{k=1}^6 A_k(T_x w)\,s_k(x)\qquad
B_V(w) = \sum_{1\leq k < \ell \leq 6}B_{k\ell}(T_x w)\,s_k\,s_\ell
$$
have $\V_x$-invariant $w$ coefficients and so, can be $V$-parametrized.  Evaluated at a $45$-point
$$p_{45}^w=T^{-1}(\pV{ac}{bd})$$
in the $w$-space,
$$
A=A_V(p_{45}^w) = s_b + s_d \quad \text{and}\quad
B=B_V(p_{45}^w) = s_b\,s_d.
$$
Now, compute the roots $s_b$ and $s_d$ by solving the quadratic equation
$$z^2 - A\,z + B=0.$$

For almost any choice of $V$ and initial point $p_0$, the dynamics of $g_V$ finds a $45$-line---hence, a $45$-point---in the $w$-space.  Accordingly, a pair of roots to $P_V$ is computable from the iterative results.  (A \emph{Mathematica} notebook with supporting files that implements the algorithm is available at \cite{web}.)  
\section*{Acknowledgments}

This work stems from discussions with Peter Doyle.  The referee's comments contributed to a substantial improvement in the article, including the appearance of Figure~\ref{fig:gL45}.

\end{document}